\theoremstyle{definition}
\newtheorem{thm}{Theorem} %[section]
\newtheorem{rem}[thm]{Remark}
\newtheorem{lem}[thm]{Lemma}
\newtheorem{cor}[thm]{Corollary}
\newcommand{\Z}{\mathbb{Z}}
\newcommand{\N}{\mathbb{N}}
\newcommand{\R}{\mathbb{R}}
\newcommand{\abs}[1]{\left\vert #1 \right\vert}	% absolute value / norm
\renewcommand{\l}{\langle}
\renewcommand{\r}{\rangle}
\renewcommand{\a}{\alpha}
\newcommand{\G}{\Gamma}
\newcommand{\e}{{\rm e}}
\renewcommand{\O}{\Omega}
\newcommand{\wt}{\widetilde}
\newcommand\dint{{\,\rm d}}
\newcommand{\mix}{{\rm mix}}
\newcommand{\vol}{{\rm vol}}
\newcommand{\supp}{{\rm supp}}
\renewcommand{\H}{H_d^{s,\mix}}
\newcommand{\Ho}{\mathring{H}_d^{s,\mix}}
\newcommand{\X}{\mathbb{X}}
\newcommand{\Qo}{\mathring{Q}}
\title{On ``Upper error bounds for quadrature formulas on function classes'' 
				by K.~K.~Frolov}   
\author{    
Mario Ullrich
%\footnote{This   
%author was partially supported by the DFG GRK 1523.}\\   
%Mathematisches Institut, Universit\"at Jena\\   
%Ernst-Abbe-Platz 2, 07743 Jena, Germany\\   
%email: ullrich.mario@gmail.com 
}   
\begin{document}
\maketitle  
 
\begin{abstract} 
This is a tutorial paper that 
gives the complete proof of a result of Frolov~\cite{Fr76} that shows the optimal order 
of convergence for numerical integration of functions with bounded mixed 
derivatives. The presentation follows Temlyakov~\cite{Te03}, see also \cite{Te93}.
\end{abstract} 

\bigskip

\section{Introduction}

We study cubature formulas for the approximation of the 
$d$-dimensional integral
\[
I(f) \,=\, \int_{[0,1]^d} f(x) \dint x
\]
for functions $f$ with bounded mixed derivatives.
For this, let $D^\a f$, $\a\in\N_0^d$, be the usual partial derivative of a 
function $f$ and define the norm
\begin{equation} \label{eq:norm}
\|f\|_{s,\mix}^2 \,:=\, \sum_{\a\in\N_0^d:\, \|\a\|_\infty\le s} \|D^\a f\|_{L_2}^2,
\end{equation}
where $s\in\N$.
In the following we will study the class (or in fact the unit ball) 
\begin{equation} \label{eq:class}
H_d^{s,\mix} \,:=\, \overline{\bigl\{f\in C^{sd}([0,1]^d) \colon \|f\|_{s,\mix}\le1 \bigr\}}, 
\end{equation}
i.e. the closure in $C([0,1]^d)$ (with respect to $\|\cdot\|_{s,\mix}$) 
of the set of $sd$-times 
continuously differentiable functions $f$ with $\|f\|_{s,\mix}\le1$. 
Additionally, we will study the class
\begin{equation} \label{eq:class2}
\mathring{H}_d^{s,\mix}  \,:=\, \bigl\{f\in H_d^{s,\mix} \colon {\rm supp}(f)\subset (0,1)^d \bigr\}.
\end{equation}
%M Note that $\Ho$ contains only 1-periodic functions.

The algorithms under consideration are of the form
\begin{equation} \label{eq:algorithms}
Q_n(f) \,=\, \sum_{j=1}^n a_j f(x^j) 
\end{equation}
for a given set of nodes $\{x^j\}_{j=1}^{n}$, $x^j=(x^j_1,\dots,x^j_d)\in[0,1]^d$,
and weigths $(a_j)_{j=1}^{n}$, $a_j\in\R$, 
i.e.~the algorithm $A_n$ uses at most $n$ function evaluations of the input function.

The worst case error of $Q_n$ in the function class $H$ is defined as      
$$       
e(Q_n,H)=\sup_{f\in H} |I(f) - Q_n(f)|.      
$$       
%whereas the $n$th minimal worst case error is 
%\begin{equation}  \label{eq:error}      
%e(n, H) := \inf_{Q_n}\, e(Q_n, H).      
%\end{equation}     
%Here, the infimum is over all algorithms of the form \eqref{eq:algorithms}.
We will prove the following theorem.

\begin{thm} \label{thm1}
Let $s,d\in\N$. Then there exists a sequence of algorithms $(Q_n)_{n\in\N}$ 
such that
\[
e(Q_n, \Ho) \,\le\, C_{s,d}\; n^{-s}\, (\log n)^{\frac{d-1}{2}},
\]
where $C_{s,d}$ may depend on $s$ and $d$.
\end{thm}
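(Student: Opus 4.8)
The plan is to use a Frolov-type lattice cubature formula and to analyze its error through the Poisson summation formula. Concretely, I would fix a nonsingular matrix $B$ and consider
\[
Q_B(f) \,=\, \frac{1}{\abs{\det B}}\sum_{m\in\Z^d} f\bigl(B^{-T}m\bigr).
\]
Since every $f\in\Ho$ has $\supp(f)\subset(0,1)^d$, only the finitely many $m$ with $B^{-T}m\in(0,1)^d$ contribute, so $Q_B$ is an admissible formula of the form \eqref{eq:algorithms} with $n\asymp\abs{\det B}$ equal-weight nodes. Applying Poisson summation to the smooth, compactly supported integrand gives
\[
Q_B(f)-I(f) \,=\, \sum_{k\in\Z^d\setminus\{0\}} \hat f(Bk),
\]
because the $k=0$ term equals $\hat f(0)=I(f)$. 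Thus everything reduces to making $\hat f$ small on the nonzero points of the \emph{dual} lattice $B\Z^d$.

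The key is to choose $B$ so that $B\Z^d$ avoids the coordinate hyperplanes in a strong, quantitative sense. Following Frolov, I would start from a monic polynomial of degree $d$ that is irreducible over $\mathbb{Q}$ and has $d$ distinct real roots $\xi_1,\dots,\xi_d$, and take $A=(\xi_i^{\,j-1})_{i,j=1}^d$. For $k\in\Z^d\setminus\{0\}$ the coordinate $(Ak)_i=Q_k(\xi_i)$ is the value of an integer polynomial $Q_k$ of degree $<d$, so $\prod_i(Ak)_i=\pm N_{K/\mathbb{Q}}\bigl(Q_k(\xi_1)\bigr)$ is a nonzero rational integer, whence $\prod_i\abs{(Ak)_i}\ge1$ (Frolov's admissibility lemma). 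I would then dilate, $B=aA$ with $a>1$, so that $V:=\abs{\det B}=a^d\abs{\det A}\asymp n$ and, crucially,
\[
\prod_{i=1}^d \abs{(Bk)_i} \,=\, a^d\prod_{i=1}^d\abs{(Ak)_i} \,\ge\, c_0\,V \qquad(k\neq0).
\]

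To exploit smoothness I would split the error by Cauchy--Schwarz with the polynomial weight $w(y):=\sum_{\norm{\a}_\infty\le s}\prod_i(2\pi y_i)^{2\a_i}$, which satisfies $w(y)\asymp\prod_i(1+\abs{y_i})^{2s}$:
\[
\abs{Q_B(f)-I(f)}^2 \,\le\, \Bigl(\sum_{k\neq0} w(Bk)\,\abs{\hat f(Bk)}^2\Bigr)\Bigl(\sum_{k\neq0} w(Bk)^{-1}\Bigr).
\]
For the first factor, observe $w(y)\abs{\hat f(y)}^2=\sum_{\norm{\a}_\infty\le s}\abs{\widehat{D^\a f}(y)}^2$ and that each $D^\a f$ is again supported in $(0,1)^d$; since $B\Z^d$ is uniformly separated (it is coarse, with minimal distance $\gtrsim a$), the Plancherel--P\'olya inequality for functions band-limited to the unit cube yields $\sum_{k}\abs{\widehat{D^\a f}(Bk)}^2\le C\norm{D^\a f}_{L_2}^2$ with $C$ independent of $a$. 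Summing the finitely many $\a$ bounds the first factor by $C\norm{f}_{s,\mix}^2\le C$. This is precisely where the support condition defining $\Ho$ enters: for general $f\in\H$ Poisson summation would produce boundary contributions. The bound, first proved for smooth $f$, extends to all of $\Ho$ by density, using that point evaluations, hence $Q_B$, are continuous on $\H$.

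The remaining factor $S:=\sum_{k\neq0}w(Bk)^{-1}\asymp\sum_{y\in B\Z^d\setminus\{0\}}\prod_i(1+\abs{y_i})^{-2s}$ is the technical heart, and I expect it to be the main obstacle. One must convert the multiplicative lower bound $\prod_i\abs{y_i}\ge c_0V$ into the sharp estimate
\[
S \,\le\, C_{s,d}\,V^{-2s}(\log V)^{d-1}.
\]
I would prove this by a dyadic decomposition of $B\Z^d\setminus\{0\}$ according to the sizes $\abs{y_i}\in[2^{\ell_i},2^{\ell_i+1})$, bounding the number of lattice points in each anisotropic box by the volume-to-covolume ratio and the admissibility (which forbids points too close to any hyperplane $y_i=0$), and then summing the resulting geometric series in the $\ell_i$. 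The admissibility constraint forces $\sum_i\ell_i\gtrsim\log V$, and it is the number of level tuples $(\ell_1,\dots,\ell_d)$ meeting this constraint that produces the factor $(\log V)^{d-1}$. Combining the two factors gives $\abs{Q_B(f)-I(f)}\le C\,V^{-s}(\log V)^{(d-1)/2}$; since $V\asymp n$ and the bound is monotone in $V$ (so intermediate budgets $n$ are handled by using the largest admissible dilation), this is the claimed estimate $C_{s,d}\,n^{-s}(\log n)^{(d-1)/2}$.
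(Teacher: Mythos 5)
Your proposal is correct and follows the same architecture as the paper's proof: your $Q_B$ with $B=aA$ is exactly the rule $\Qo_a$ from \eqref{eq:algorithm1}, and you use the same chain --- Poisson summation to reduce the error to $\sum_{k\neq0}\hat f(Bk)$, Cauchy--Schwarz with the weight $\nu_s$, and a dyadic box-counting argument driven by admissibility that yields the factor $V^{-2s}(\log V)^{d-1}$, matching the paper's decomposition into the sets $\rho(m)$ together with Corollary~\ref{coro:boxes}. You deviate in how two sub-lemmas are justified, and both deviations are legitimate. First, you get the admissibility bound $\prod_i\abs{(Ak)_i}\ge1$ by observing that this product is $\pm N_{K/\mathbb{Q}}(Q_k(\xi_1))$, a nonzero rational integer since $Q_k(\xi_1)$ is a nonzero algebraic integer of degree $<d$ over a degree-$d$ field; the paper instead uses the fundamental theorem of symmetric polynomials for integrality (Lemma~\ref{lem:symmetric}) and an iterated polynomial-division argument against irreducibility for nonvanishing (Lemma~\ref{lem:product}) --- your route is shorter but imports algebraic number theory, the paper's is elementary and self-contained. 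Second, you bound the factor $\sum_{k}\nu_s(Bk)\abs{\hat f(Bk)}^2$ by citing the Plancherel--P\'olya inequality for the separated set $B\Z^d$ (the separation $\gtrsim a$ does follow from admissibility, and the constant is uniform once the separation exceeds $1$); the paper proves this from scratch via the substitution $x=a^{-1}Ty$ and the periodization/Parseval/Jensen argument of Lemma~\ref{lem:fourier_bound}, which is in effect a self-contained proof of exactly that sampling inequality and moreover gives the sharp constant $C(a,T)\to1$. The one point you assert without argument is $n\asymp\abs{\det B}$: for a general (non-integration) lattice this is not automatic, and the paper devotes Lemma~\ref{lem:number_of_nodes} (citing Skriganov, whose proof again rests on admissibility) to the two-sided estimate $\bigl|\,|X_a^d|-a^d\det(T^{-1})\bigr|\le C_T\ln^{d-1}(1+a^d)$; both directions of the comparison $n\asymp a^d$ are needed to convert your bound in $V$ into the claimed bound in $n$, so you should cite or prove this count rather than take it for granted.
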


Using standard techniques, see e.g.~\cite[Sec.~2.12]{SJ94} or \cite[Thm.~1.1]{Te03}, 
one can deduce (constructively) from the algorithm that is used to prove 
Theorem~\ref{thm1} a quadrature rule for $\H$ that has the same order of convergence. 
This results in the following corollary.

\begin{cor} \label{coro1}
Let $s,d\in\N$. Then there exists a sequence of algorithms $(Q_n)_{n\in\N}$ 
such that
\[
e(Q_n, \H) \,\le\, \wt C_{s,d}\; n^{-s}\, (\log n)^{\frac{d-1}{2}},
\]
where $\wt C_{s,d}$ may depend on $s$ and $d$.
\end{cor}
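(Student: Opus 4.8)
The plan is to transfer the rules of Theorem~\ref{thm1} from the subspace $\Ho$ to the full class $\H$ by means of a smooth boundary-flattening (periodizing) change of variables, following the transformation technique of \cite[Sec.~2.12]{SJ94}. Concretely, I would fix once and for all a one-dimensional substitution $\lambda\colon[0,1]\to[0,1]$ that absorbs boundary values, push a general $f\in\H$ through its $d$-fold tensor product, and verify two facts: that the substitution leaves the integral unchanged, and that it carries $\H$ boundedly into a fixed multiple of (the closure of) $\Ho$. Transporting the nodes and weights of $Q_n$ through $\lambda$ then yields a rule for $\H$ with the same order of convergence, up to a constant.

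For the construction, choose $\lambda\in C^\infty([0,1])$ strictly increasing with $\lambda(0)=0$, $\lambda(1)=1$, and $\lambda'(x)=c\exp(-1/(x(1-x)))$, where $c$ normalizes $\int_0^1\lambda'=1$, so that $\lambda'$ and all its derivatives vanish at the endpoints. Writing $\Lambda(x)=(\lambda(x_1),\dots,\lambda(x_d))$ and
\[
(Tf)(x)\,:=\,f(\Lambda(x))\prod_{i=1}^d\lambda'(x_i),
\]
the change of variables $y_i=\lambda(x_i)$ gives $I(Tf)=I(f)$ at once, since $\prod_i\lambda'(x_i)\dint x=\dint y$. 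Moreover every derivative of the Jacobian factor vanishes on $\partial([0,1]^d)$, so $Tf$ vanishes to infinite order at the boundary; multiplying by a smooth cutoff $\chi_\delta$ equal to $1$ on $[\delta,1-\delta]^d$ and supported in $(0,1)^d$, the product $\chi_\delta Tf\in\Ho$ converges to $Tf$ in $\|\cdot\|_{s,\mix}$ as $\delta\to0$, because the polynomial-in-$\delta^{-1}$ growth of the cutoff's derivatives is dominated by the exponential decay of $Tf$ near the boundary. Hence $Tf$ lies in the $\|\cdot\|_{s,\mix}$-closure of $\Ho$, which, by continuity of $I-Q_n$, is all that the error estimate of Theorem~\ref{thm1} requires.

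The heart of the matter, and the step I expect to be the main obstacle, is the norm bound $\|Tf\|_{s,\mix}\le c_{s,d}\|f\|_{s,\mix}$. I would expand $D^\a(Tf)$ for $\|\a\|_\infty\le s$ by the Leibniz rule in each coordinate, distributing derivatives between the Jacobian factor $\lambda'(x_i)$ and the composition $f\circ\Lambda$, and apply the Fa\`a di Bruno chain rule to the latter. This writes $D^\a(Tf)(x)$ as a finite ($s,d$-dependent) sum of terms $(D^\beta f)(\Lambda(x))\prod_iP_i(x_i)$ with $\|\beta\|_\infty\le s$, where each $P_i$ is a product of derivatives $\lambda^{(j)}(x_i)$. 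Bounding the $L_2$ norm of such a term by the substitution $y=\Lambda(x)$, the Jacobian contributes $\prod_i\lambda'(x_i)^{-1}$, so its squared norm is controlled by $\int|D^\beta f(y)|^2\prod_iP_i(x_i(y))^2\lambda'(x_i(y))^{-1}\dint y$; the estimate closes exactly when each weight $P_i^2/\lambda'$ is bounded on $[0,1]$. Here the choice of $\lambda$ pays off: the Leibniz step always differentiates the prefactor into some $\lambda^{(p+1)}(x_i)=\lambda'(x_i)R_{p+1}(x_i)$ with $R_{p+1}$ of at most polynomial growth at the endpoints, so every $P_i$ retains a factor $\lambda'$ and equals $\lambda'$ times a function of polynomial growth; consequently $P_i^2/\lambda'=\lambda'\cdot(\cdots)^2$ stays bounded. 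Verifying this balancing uniformly over all the Fa\`a di Bruno terms is the technical core, after which one reads off $\|D^\a(Tf)\|_{L_2}\le c_{s,d}\max_{\|\beta\|_\infty\le s}\|D^\beta f\|_{L_2}$ and hence the norm bound.

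Finally I would define the transported rule
\[
\wt Q_n(f)\,:=\,Q_n(Tf)\,=\,\sum_{j=1}^n a_j\Bigl(\prod_{i=1}^d\lambda'(x^j_i)\Bigr)f\bigl(\lambda(x^j_1),\dots,\lambda(x^j_d)\bigr),
\]
which is again a cubature formula of the form \eqref{eq:algorithms}, using the $n$ nodes $\Lambda(x^j)$ and weights $a_j\prod_i\lambda'(x^j_i)$. For $\|f\|_{s,\mix}\le1$ we have $Tf/c_{s,d}\in\overline{\Ho}$, so
\[
\abs{I(f)-\wt Q_n(f)}\,=\,\abs{I(Tf)-Q_n(Tf)}\,\le\,c_{s,d}\,e(Q_n,\Ho),
\]
and Theorem~\ref{thm1} gives $e(\wt Q_n,\H)\le c_{s,d}\,C_{s,d}\,n^{-s}(\log n)^{(d-1)/2}$. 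Setting $\wt C_{s,d}:=c_{s,d}C_{s,d}$ proves the corollary.
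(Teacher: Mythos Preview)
Your proposal is correct and is precisely the periodizing change-of-variables technique that the paper invokes (without details) by citing \cite[Sec.~2.12]{SJ94} and \cite[Thm.~1.1]{Te03}; you have simply supplied the standard argument the paper leaves to those references. The one place that deserves a little extra care is the closure/approximation step for $Tf$, but your observation that every term of $D^\a(Tf)$ retains a factor $\lambda'$ (which decays exponentially at the boundary) is exactly what makes both the cutoff argument and the $P_i^2/\lambda'$ boundedness go through.
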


%Another corollary of Theorem~\ref{thm1} is 

The proof of Theorem~\ref{thm1}, and hence also of Corollary~\ref{coro1}, 
is constructive, i.e.~we will show how to construct the nodes and weights 
of the used algorithms. 
%The plan of the paper is as follows. In Section~\ref{sec:thm1} we present 
%the algorithm and the proof of the error bound for functions in $\Ho$ 
%from Theorem~\ref{thm1}. 
%In Section~\ref{sec:coro1} we construct an optimal algorithm for the class $\H$ 
%to prove Corollary~\ref{coro1}.
% and, finally, 
%in Section~\ref{sec:disc} we discuss the consequences of these 
%results for upper bounds on the $L_p$-discrepancy.

\begin{rem}\label{rem:lower_bounds}
The upper bounds of Theorem~\ref{thm1} and Corollary~\ref{coro1} that will 
be proven in the next section for a specific algorithm, see~\eqref{eq:algorithm1}, 
are best possible in the sense of the order of convergence. 
That is, there are matching lower bounds that hold for arbitrary 
quadrature rules that use only function values, see e.g.~\cite[Theorem~3.2]{Te03}.
\end{rem}

\begin{rem}
There is a natural generalization of the spaces $\Ho$, 
say $\mathring{H}^{s,{\rm mix}}_{d,p}$, 
where the $L_2$-norm in \eqref{eq:norm} is replaced by an $L_p$-norm, $1< p<\infty$. 
The same lower bounds as mentioned in Remark~\ref{rem:lower_bounds} are valid also in this case. 

Obviously, the upper bounds from Theorem~\ref{coro1} hold for these spaces 
if $p\ge2$, since the spaces get smaller for larger $p$. 
That the same algorithm satisfies the optimal order if $1<p<2$ was proven by 
Skriganov~\cite[Theorem~2.1]{Sk94}.
We refer to \cite{Te03} and references therein 
for more details on this, the more delicate case $p=1$, 
and the generalization to non-integer smoothness.
\end{rem}

\section{Proof of Theorem~\ref{thm1}} \label{sec:thm1}

\subsection{The algorithm for $\Ho$}

We start with the construction of the nodes of our quadrature rule. 
See Sloan and Joe~\cite{SJ94} for a more comprehensive introduction to this topic.
In the setting of Theorem~\ref{thm1} the set $X\subset[0,1)^d$ of nodes 
will be a subset of a \emph{lattice} $\X\subset\R^d$, 
i.e.~$x,y\in\X$ implies $x\pm y\in\X$. In fact, we take $X=\X\cap[0,1)^d$.

The lattice $\X$ will be ``$d$-dimensional''\footnote{It is well known that every 
lattice in $\R^d$ can be written as $T(\Z^m)$ for some $m\le d$ and some 
$d\times m$-matrix $T$ with linearly independent columns. 
The number $m$ is called the \emph{dimension} of the lattice.}, 
i.e.~there exists
a non-singular $d\times d$-matrix $T$ such that 
\begin{equation}\label{eq:lattice}
\X \,:=\, T(\Z^d) \,=\, \bigl\{Tx\colon x \in\Z^d\bigr\}. 
\end{equation}
The matrix $T$ is called the \emph{generator} of the lattice $\X$.
Obviously, every multiple of $\X$, i.e.~$c\X$ for some $c\in\R$, is again a lattice 
and note that while $\X$ is a \emph{lattice}, 
it is not necessarily an \emph{integration lattice}, 
i.e.~in general we do not have $\X\supset\Z^d$.

The nodes for our quadrature rule for functions from $\Ho$ will be all points 
inside the cube $[0,1)^d$ of the shrinked lattice $a^{-1}\X$, $a>1$. 
That is, we will use the set of points
\begin{equation} \label{eq:nodes}
X_a^d \,:=\, \bigl(a^{-1}\X\bigr) \cap [0,1)^d, \qquad a>1.
\end{equation}

For the construction of the nodes it remains to 
present a specific generator matrix $T$ that is suitable for our purposes. 
For this, define the polynomials 
\begin{equation} \label{eq:poly}
P_d(t) \,:=\, \prod_{j=1}^d \bigl(t-2j+1\bigr) -1, \qquad t\in\R. 
\end{equation}
Obviously, the polynomial $P_d$ has only integer coefficients, and it is 
easy to check that it is irreducible\footnote{A polynomial $P$ is called 
\emph{irreducible} over $\mathbb{Q}$ if $P=GH$ for two polynomials 
$G, H$ with rational coefficients implies that one of them has degree zero.
%if there are no two polynomials 
%$G, H$ with rational coefficients and degree greater one, such that $P=GH$.
In fact, every polynomial of the form $\prod_{j=1}^d(x-b_j)-1$ with 
different $b_j\in\Z$ is irreducible, but has not necessarily $d$ different real roots.}
(over $\mathbb{Q}$) and has $d$ different real roots.
Let $\xi_1,\dots,\xi_d\in\R$ be the roots of $P_d$.
Using these roots we define the $d\times d$-matrix $B$ by 
\begin{equation} \label{eq:generator_dual}
B \,=\, \bigl(B_{i,j}\bigr)_{i,j=1}^d \,:=\, \Bigl(\xi_i^{j-1}\Bigr)_{i,j=1}^d.
\end{equation}
This matrix is a Vandermonde matrix and hence invertible 
and we define the generator matrix of our lattice by
\begin{equation} \label{eq:generator}
T \,=\, (B^\top)^{-1},
\end{equation}
where $B^\top$ is the transpose of $B$. 
%Usually, $B(\Z^d)$ is called the \emph{dual lattice} associated with $\X=T(\Z^d)$.
It is well known that $\X^*:=B(\Z^d)$ is the \emph{dual lattice} associated with 
$\X=T(\Z^d)$, i.e.~$y\in\X^*$ if and only if $\l x,y\r\in\Z$ for all $x\in\X$.

We define the quadrature rule for functions $f$ from $\Ho$ by
\begin{equation}\label{eq:algorithm1}
\mathring{Q}_a(f) \,:=\, a^{-d} \det(T)\, 
%\left(a^d \det(T^{-1})\right)^{-1}\, 
	\sum_{x\in X_a^d} f(x), \qquad a>1.
\end{equation}
In the next subsection we will prove that $\mathring{Q}_a$ has the optimal order of 
convergence for $\Ho$. 

Note that $\Qo_a(f)$ uses $|X_a^d|$ function values of $f$ and 
that the weights of this algorithm are equal, but 
do not (in general) sum up to one. 
While the number $|X_a^d|$ of points can be estimated in terms of the 
determinant of the corresponding generator matrix, it is in general 
not equal.
In fact, if $a^{-1}\X$ would be an integration lattice, then it is well known 
that $|X_a^d|=a^d \det(T^{-1})$, see e.g.~\cite{SJ94}.
For the general lattices that we consider, we know, 
however, that these numbers are of the same order, see 
Skriganov~\cite[Theorem~1.1]{Sk94}\footnote{Skriganov proved this result 
for \emph{admissible} lattices. The required property will be proven in 
Lemma~\ref{lem:product}, see also \cite[Lemma~3.1(2)]{Sk94}.}. 

\begin{lem}\label{lem:number_of_nodes}
Let $\X=T(\Z^d)\subset\R^d$ be a lattice with generator $T$ of the 
form~\eqref{eq:generator}, 
and let $X_a^d$ be given by~\eqref{eq:nodes}. 
Then there exists a constant $C_T$ that is independent of $a$ such that
\[
\left| |X_a^d| - a^d \det(T^{-1}) \right| 
\;\le\; C_T\, \ln^{d-1}\bigl(1+a^d\bigr)
\]
for all $a>1$.
In particular, we have 
\[
\lim_{a\to\infty}\, \frac{|X_a^d|}{a^d \det(T^{-1})\,} = 1.
\]
\end{lem}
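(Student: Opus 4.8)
The plan is to translate the assertion into an estimate for the remainder term when counting the points of a fixed lattice in a cube, and then to exploit the arithmetic origin of the generator $T$. First I would remove the scaling: a point $a^{-1}x$ with $x\in\X$ lies in $[0,1)^d$ precisely when $x\in[0,a)^d$, so that $|X_a^d|=|\X\cap[0,a)^d|$. Since $\X=T(\Z^d)$ has covolume $\det(T)$, the heuristic count is $\vol([0,a)^d)/\det(T)=a^d\det(T^{-1})$, and the lemma becomes a statement about how large the remainder in this count can be for an axis-parallel cube.

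The conceptual heart is the \emph{admissibility} of the lattice, i.e.\ the existence of a constant $c>0$ with $\prod_{j=1}^d|x_j|\ge c$ for every nonzero lattice point $x=(x_1,\dots,x_d)$. I would prove this first for the dual lattice $\X^*=B(\Z^d)$: a nonzero point has the form $(g(\xi_1),\dots,g(\xi_d))$ with $g(t)=\sum_{j=1}^d m_j t^{j-1}\in\Z[t]$, $m\neq0$, of degree $<d$. Because $\xi_1,\dots,\xi_d$ are exactly the conjugate roots of the monic irreducible polynomial $P_d$, these are the conjugates of the algebraic integer $g(\xi_1)$ in the totally real field $K=\mathbb{Q}(\xi_1)$, so their product equals the field norm $N_{K/\mathbb{Q}}(g(\xi_1))\in\Z$. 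As $\deg g<d=\deg P_d$ forces $g(\xi_1)\neq0$, this norm is a nonzero integer, whence $\prod_j|x_j|\ge1$. The primal lattice $\X=(B^\top)^{-1}(\Z^d)$ is the Euclidean dual of $\X^*$, hence the Minkowski image of the trace-dual module in the same field $K$; the same norm computation (after clearing denominators to reduce to integer norms) gives a positive lower bound there as well. This is precisely the property recorded in Lemma~\ref{lem:product}.

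With admissibility in hand there are two ways to finish, and the counting estimate is where the real work lies. The quickest is to invoke Skriganov~\cite[Theorem~1.1]{Sk94}, whose hypothesis is exactly admissibility and whose conclusion is the stated bound. For a self-contained argument I would use Poisson summation: writing the count as a sum of the (factorising) indicator of the cube over $a^{-1}\X$, the $y=0$ Fourier mode produces the main term $a^d\det(T^{-1})$, while the remainder is $\sum_{0\neq y\in a\X^*}\widehat{\chi}(y)$ with Fourier coefficients of size $\sim\prod_j|y_j|^{-1}$. The admissibility of $\X^*$ then controls the truncated sum $\sum_{0\neq z\in\X^*}\prod_j|z_j|^{-1}$: the constraint $\prod_j|z_j|\ge c$ removes one of the $d$ logarithmic factors that a product of $d$ independent one-dimensional sums would contribute, which is exactly why the bound carries the exponent $d-1$ rather than $d$.

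I expect the counting estimate to be the main obstacle, and it is essential that admissibility be used in a genuinely multiplicative way. A naive tiling argument---bounding the remainder by the number of fundamental cells meeting $\partial[0,a)^d$---only yields $O(a^{d-1})$, which is far too weak; the polylogarithmic remainder reflects the fact that an admissible lattice has very few points near the coordinate hyperplanes. Making the Poisson route rigorous also requires smoothing the sharp indicator (whose transform decays too slowly for absolute convergence) and choosing the truncation carefully, both controlled again by the product bound. Once the remainder is shown to be $O(\ln^{d-1}(1+a^d))=o(a^d)$, dividing by $a^d\det(T^{-1})\to\infty$ gives the stated limit.
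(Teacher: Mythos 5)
Your proposal is correct and follows essentially the same route as the paper: the paper gives no self-contained proof of this lemma either, but derives it exactly as in your ``quickest'' option, by invoking Skriganov \cite[Theorem~1.1]{Sk94} for admissible lattices, with the admissibility hypothesis supplied by the norm-form argument of Lemma~\ref{lem:product} (and the transfer to the primal lattice handled via \cite[Lemma~3.1(2)]{Sk94}, matching your trace-dual remark). Your additional Poisson-summation sketch is a reasonable outline of how one would make the count self-contained, but it is not carried out in the paper and is not needed for the citation-based argument.
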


\begin{rem}
It is still not clear if the corresponding QMC algorithm, i.e.~the quadrature rule 
\eqref{eq:algorithm1} with $a^{-d} \det(T)$ replaced by $|X_a^d|^{-1}$, 
has the same order of convergence. 
In fact, if true, this would imply the optimal order of the $L_2$-discrepancy 
of a modification of the set $X_a^d$, see~\cite{Fr80}. 
We leave this as an open problem. 
%It is not clear (to me) if the algorithm with weights $|X_a^d|^{-1}$ has the same order 
%of convergence. 
%A proof of this could be done, e.g., with an improved version of 
%Lemma~\ref{lem:number_of_nodes} to bound $\abs{|X_a^d|^{-1}-a^{-d}\det(T)^{-1}}$. 
%I think (but I didn't check it carefully) it is easy to get the same order 
%in the case $s=1$ since $\abs{|X_a^d|^{-1}-a^{-d}\det(T)^{-1}}\ll |X_a^d|^{-1}$.
\end{rem}

In the remaining subsection we prove the crucial property of these nodes. 
For this we need the following corollary of the 
\emph{Fundamental Theorem of Symmetric Polynomials}, 
see,~\cite[Theorem~6.4.2]{FR97}.

\begin{lem} \label{lem:symmetric}
Let $P(x)=\prod_{j=1}^d(x-\xi_j)$ and $G(x_1,\dots,x_d)$ be polynomials 
with integer coefficients. 
Additionally, assume that $G(x_1,\dots,x_d)$ is symmetric in $x_1,\dots,x_d$, 
i.e.~invariant under permutations of $x_1,\dots,x_d$.
Then, $G(\xi_1,\dots,\xi_d)\in\Z$.
%In particular, $G(m)\in\Z$ for all $m\in\Z^k$.
\end{lem}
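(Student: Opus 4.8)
The plan is to reduce the claim to the classical Fundamental Theorem of Symmetric Polynomials, which states that every symmetric polynomial in $x_1,\dots,x_d$ with integer coefficients can be written as an integer-coefficient polynomial in the elementary symmetric polynomials $e_1,\dots,e_d$. Concretely, there is a polynomial $\Phi$ with integer coefficients such that
\[
G(x_1,\dots,x_d) \,=\, \Phi\bigl(e_1(x_1,\dots,x_d),\dots,e_d(x_1,\dots,x_d)\bigr),
\]
where $e_k(x_1,\dots,x_d)$ is the $k$-th elementary symmetric polynomial. This is exactly the cited \cite[Theorem~6.4.2]{FR97}, so I would invoke it directly rather than reprove it.

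The key step is then to relate the elementary symmetric polynomials evaluated at the roots $\xi_1,\dots,\xi_d$ to the coefficients of $P$. Since $P(x)=\prod_{j=1}^d(x-\xi_j)$ is assumed to have integer coefficients, expanding the product and comparing coefficients gives Vieta's formulas: writing
\[
P(x) \,=\, x^d - e_1(\xi)\,x^{d-1} + e_2(\xi)\,x^{d-2} - \cdots + (-1)^d e_d(\xi),
\]
each $e_k(\xi_1,\dots,\xi_d)$ equals $\pm$ the corresponding coefficient of $P$, hence is an integer. Substituting into the expression from the Fundamental Theorem, $G(\xi_1,\dots,\xi_d)=\Phi\bigl(e_1(\xi),\dots,e_d(\xi)\bigr)$ is a polynomial with integer coefficients evaluated at integer arguments, and is therefore itself an integer. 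That completes the argument.

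I would emphasize one technical point in the write-up: the statement of the Fundamental Theorem requires that the elementary symmetric polynomials be understood in exactly the right normalization so that Vieta's formulas apply with $P$ monic, which is why the hypothesis $P(x)=\prod_j(x-\xi_j)$ (a monic polynomial) is stated. No genuine obstacle arises here; the only thing to be careful about is not to conflate $P$ having integer coefficients with $\xi_j$ being integers---the $\xi_j$ are generally irrational, and the whole point is that symmetry in them collapses to integrality via the integer coefficients of $P$. The argument is essentially a one-line composition once the Fundamental Theorem is cited, so the main ``obstacle'' is purely expository: stating Vieta's relations cleanly and noting that $G$ with integer coefficients is closed under composition with integer-coefficient polynomials.
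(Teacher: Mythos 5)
Your proposal is correct and is precisely the argument the paper has in mind: the paper states this lemma without proof, introducing it as a corollary of the Fundamental Theorem of Symmetric Polynomials \cite[Theorem~6.4.2]{FR97}, and your write-up (express $G$ as an integer-coefficient polynomial $\Phi$ in the elementary symmetric polynomials, then observe via Vieta's formulas that the monic $P$ with integer coefficients forces $e_k(\xi_1,\dots,\xi_d)\in\Z$) supplies exactly the details that citation elides. Your cautionary remarks about monicity and about not confusing integrality of the coefficients of $P$ with integrality of the $\xi_j$ are apt and do not signal any gap.
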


We obtain that the elements of the dual lattice $B(\Z^d)$ satisfy the following

\begin{lem} \label{lem:product}
Let $0\neq z=(z_1,\dots,z_d)\in B(\Z^d)$ with $B$ from \eqref{eq:generator_dual}. 
Then, $\prod_{j=1}^d z_i \in\Z\setminus0$.
\end{lem}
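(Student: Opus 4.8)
The plan is to unwind the definitions and reduce the claim to an integrality statement that the Fundamental Theorem of Symmetric Polynomials (Lemma~\ref{lem:symmetric}) can handle. Write $z = B m$ for some nonzero $m = (m_1,\dots,m_d)^\top \in \Z^d$. By the definition \eqref{eq:generator_dual} of $B$, the $i$-th coordinate of $z$ is
\[
z_i \,=\, \sum_{j=1}^d \xi_i^{\,j-1} m_j \,=\, m_1 + m_2 \xi_i + \cdots + m_d \xi_i^{\,d-1}.
\]
In other words, each coordinate has the form $z_i = Q(\xi_i)$, where $Q(x) := \sum_{j=1}^d m_j x^{j-1}$ is a single polynomial of degree at most $d-1$ with \emph{integer} coefficients $m_j$, evaluated at the respective root $\xi_i$ of $P_d$. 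Thus the product in question is
\[
\prod_{i=1}^d z_i \,=\, \prod_{i=1}^d Q(\xi_i).
\]

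The key observation is that the map $G(x_1,\dots,x_d) := \prod_{i=1}^d Q(x_i)$ is a polynomial with integer coefficients (since $Q$ has integer coefficients) that is \emph{symmetric} in its arguments, being a product over all $d$ of them. Therefore Lemma~\ref{lem:symmetric}, applied with $P = P_d + 1 = \prod_{j=1}^d(x - \xi_j)$ (the monic polynomial whose roots are exactly the $\xi_j$) and this $G$, yields immediately that $\prod_{i=1}^d z_i = G(\xi_1,\dots,\xi_d) \in \Z$. Note that $P_d$ itself is not monic-with-the-right-sign in the form $\prod(x-\xi_j)$ unless we add the $1$ back, since $P_d(t) = \prod_{j=1}^d(t - 2j + 1) - 1$; what matters is only that $\prod_{j=1}^d(x - \xi_j)$ has integer coefficients, which holds because $P_d$ does and the leading coefficient is $1$.

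It remains to rule out $\prod_{i=1}^d z_i = 0$, i.e.\ to show the product is in $\Z \setminus \{0\}$. Suppose some $z_i = Q(\xi_i) = 0$. Then $\xi_i$ is a root of the nonzero integer polynomial $Q$ (nonzero because $m \neq 0$), which has degree at most $d-1$. But $\xi_i$ is also a root of $P_d$, which is irreducible over $\Q$ of degree $d$; hence the minimal polynomial of $\xi_i$ over $\Q$ is $P_d$ itself (up to a constant), of degree $d$. A degree-$(d-1)$ polynomial cannot be divisible by the degree-$d$ minimal polynomial, so it cannot vanish at $\xi_i$ unless it is the zero polynomial, contradicting $m \neq 0$. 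Therefore every $z_i \neq 0$ and the product is a nonzero integer.

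The only genuinely delicate point is this last nonvanishing argument, where I must invoke the \emph{irreducibility} of $P_d$ (asserted in the text after \eqref{eq:poly}) to conclude that no nontrivial integer polynomial of degree below $d$ can annihilate a root $\xi_i$. The symmetric-polynomial half is essentially a direct citation of Lemma~\ref{lem:symmetric}; the main obstacle is simply being careful that $Q \not\equiv 0$ and that irreducibility forces the minimal polynomial to have full degree $d$.
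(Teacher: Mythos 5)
Your proposal is correct and follows essentially the same route as the paper: write $z_i = Q(\xi_i)$ for a single integer polynomial $Q$ of degree at most $d-1$, apply Lemma~\ref{lem:symmetric} to the symmetric product $\prod_{i=1}^d Q(x_i)$ to get integrality, and use irreducibility of $P_d$ to rule out $Q(\xi_i)=0$. The only cosmetic difference is in the nonvanishing step, where you invoke the standard fact that the minimal polynomial of $\xi_i$ has full degree $d$, whereas the paper re-derives this on the spot by iterated Euclidean division of $P_d$ by remainders until a nontrivial rational divisor of $P_d$ would emerge --- both arguments rest on exactly the same irreducibility, and your attention to $Q\not\equiv 0$ and to $\prod_j(x-\xi_j)=P_d+1$ having integer coefficients matches the care needed.
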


\begin{proof}
Fix $m=(m_1,\dots,m_d)\in\Z^d$ such that $Bm=z$.
Hence, 
\[
z_i \,=\, \sum_{j=1}^d m_j \xi_i^{j-1}
\]
depends only on $\xi_i$. This implies that $\prod_{j=1}^d z_i$ is a symmetric 
polynomial in $\xi_1,\dots,\xi_d$ with integer coefficients. 
By Lemma~\ref{lem:symmetric}, we have $\prod_{j=1}^d z_i\in\Z$. 

It remains to prove $z_i\neq0$ for $i=1,\dots,d$. 
Define the polynomial $R_1(x):=\sum_{j=1}^d m_j x^{j-1}$ and assume 
that $z_\ell=R_1(\xi_\ell)=0$ for some $\ell=1,\dots,d$. 
Then there exist unique polynomials $G$ and $R_2$ with rational coefficients 
such that
\[
P_d(x) \,=\, G(x) R_1(x) + R_2(x), 
\]
where ${\rm degree}(R_2)<{\rm degree}(R_1)$. By assumption, $R_2(\xi_\ell)=0$. 
If $R_2\equiv0$ this is a contradiction to the irreducibility of $P_d$. If not, 
divide $P_d$ by $R_2$ (instead of $R_1$). Iterating this procedure, we will 
eventually find a polynomial $R^*$ with ${\rm degree}(R^*)>0$ (since it has a root) 
and rational coefficients that divides $P_d$: a contradiction to the irreducibility.
This completes the proof of the lemma.
%note that the $\xi_i$'s are \emph{algebraic numbers} 
%(or even \emph{algebraic integers}), i.e.~the 
%roots of a polynomial with rational coefficients. Here, this polynomial is given 
%by $P_d$ from \eqref{eq:poly}. Since $P_d$ is irreducible (over $\mathbb{Q}$) 
%we know 
\end{proof}

We finish the subsection with a result on the maximal number of nodes in the dual 
lattice that lie in a axis-parallel box of fixed volume.

%\begin{cor} \label{coro:boxes}
%Let $B$ be the matrix from \eqref{eq:generator_dual}. 
%Then, for each axis-parallel box $\Omega\subset\R^d$ we have 
%\[
%\bigl\vert B(\Z^d)\cap \Omega \bigr\vert \,\le\, {\rm vol}_d(\Omega)+1.
%\]
%\end{cor}
%
%\begin{proof}
%Assume first that ${\rm vol}_d(\Omega)<1$. If $\Omega$ contains 2 different 
%points $z, z'\in B(\Z^d)$, then, using that this implies $z''=z-z'\in B(\Z^d)$, 
%we obtain 
%\[
%{\rm vol}_d(\Omega) \,\ge\, \prod_{i=1}^d |z_i-z'_i| 
%\,\ge\, \prod_{i=1}^d |z''_i| \,\ge\, 1
%\]
%from Lemma~\ref{lem:product}: a contradiction. 
%For ${\rm vol}_d(\Omega)\ge1$ we divide $\Omega$ along one coordinate in 
%$\lfloor {\rm vol}_d(\Omega)+1 \rfloor$ equal pieces and use the same 
%argument than above.
%\end{proof}

\begin{cor} \label{coro:boxes}
Let $B$ be the matrix from \eqref{eq:generator_dual} and $a>0$. 
Then, for each axis-parallel box $\Omega\subset\R^d$ we have 
\[
\bigl\vert aB(\Z^d)\cap \Omega \bigr\vert \,\le\, a^{-d}\,{\rm vol}_d(\Omega)+1.
\]
\end{cor}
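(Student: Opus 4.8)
The plan is to exploit Lemma~\ref{lem:product} as an \emph{admissibility} property of the scaled lattice $aB(\Z^d)$: every nonzero point $w=az$ with $0\neq z\in B(\Z^d)$ satisfies $\bigl|\prod_{j=1}^d w_j\bigr| = a^d\,\bigl|\prod_{j=1}^d z_j\bigr|\ge a^d$, since $\prod_{j=1}^d z_j\in\Z\setminus0$. Because the difference of two distinct points of $aB(\Z^d)$ is again a nonzero point of this lattice, the same lower bound holds for all such differences, and this product estimate is the only structural fact I will use.

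Write the box as $\Omega=\prod_{j=1}^d[c_j,c_j+\ell_j]$ with side lengths $\ell_j\ge0$, so that $\vol_d(\Omega)=\prod_{j=1}^d\ell_j$. First I would dispose of the degenerate case $\vol_d(\Omega)=0$: if some $\ell_j=0$, then all points of $\Omega$ share their $j$-th coordinate, so any two distinct lattice points in $\Omega$ would differ by a nonzero lattice vector with vanishing $j$-th coordinate and hence with coordinate-product $0$, contradicting admissibility; thus $\bigl|aB(\Z^d)\cap\Omega\bigr|\le1$ and the claim holds. So assume $\ell_j>0$ for all $j$.

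The main step is a projection-and-gap argument in the first coordinate. By the same reasoning as above, any two distinct lattice points in $\Omega$ have distinct first coordinates; hence I can list the points of $aB(\Z^d)\cap\Omega$ as $w^{(1)},\dots,w^{(N)}$ with strictly increasing first coordinates $w_1^{(1)}<\dots<w_1^{(N)}$, where $N:=\bigl|aB(\Z^d)\cap\Omega\bigr|$. For each $k\in\{1,\dots,N-1\}$ the vector $v^{(k)}:=w^{(k+1)}-w^{(k)}$ is a nonzero lattice point lying in the associated difference box, so $|v_j^{(k)}|\le\ell_j$ for every $j$ and $v_1^{(k)}>0$. Admissibility then gives $a^d\le\prod_{j=1}^d|v_j^{(k)}|\le v_1^{(k)}\prod_{j=2}^d\ell_j$, whence each gap satisfies $v_1^{(k)}\ge a^d/\prod_{j=2}^d\ell_j$.

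Finally I would telescope: the gaps sum to $\sum_{k=1}^{N-1}v_1^{(k)}=w_1^{(N)}-w_1^{(1)}\le\ell_1$, and combining this with the per-gap bound yields $(N-1)\,a^d/\prod_{j=2}^d\ell_j\le\ell_1$, i.e.\ $N-1\le a^{-d}\prod_{j=1}^d\ell_j=a^{-d}\vol_d(\Omega)$, which is the asserted inequality. The one place requiring care---the main obstacle---is recognizing that the product lower bound from Lemma~\ref{lem:product} simultaneously forces the first coordinates of the points in $\Omega$ to be pairwise distinct (so that the ordering is well defined) and bounds each consecutive gap from below; once both observations are in hand, the telescoping is immediate.
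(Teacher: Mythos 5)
Your proof is correct, and it takes a genuinely different route in its second half than the paper does. Both arguments rest on the same key fact, namely that Lemma~\ref{lem:product} makes $aB(\Z^d)$ \emph{admissible}: every nonzero lattice vector $v$ (in particular every difference of two distinct lattice points) satisfies $\prod_{j=1}^d |v_j| \ge a^d$, and in particular has no vanishing coordinate. The paper uses this once to show that a box of volume less than $a^d$ contains at most one lattice point, and then handles general boxes by cutting $\Omega$ along one coordinate into $\lfloor a^{-d}\vol_d(\Omega)+1\rfloor$ equal pieces, applying the small-box case to each piece. You instead run a single uniform argument: admissibility forces the first coordinates of the lattice points in $\Omega$ to be pairwise distinct, so you may sort them, bound each consecutive gap below by $a^d/\prod_{j\ge 2}\ell_j$ via the product inequality, and telescope against the total length $\ell_1$ to get $N-1 \le a^{-d}\vol_d(\Omega)$ directly. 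Your version has two small advantages: it treats the degenerate case $\vol_d(\Omega)=0$ explicitly (where the bound $N\le 1$ still holds, again by admissibility), and it sidesteps the mild bookkeeping the subdivision argument glosses over (whether the pieces are half-open, so that no lattice point on a shared face is lost or double-counted). The paper's version is slightly shorter and its pigeonhole step is the more standard device; your telescoping bound is in effect the quantitative reason the paper's one-coordinate subdivision works. All steps check out, including the implicit reduction to $N\ge 2$ before telescoping, so there is no gap.
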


\begin{proof}
Assume first that ${\rm vol}_d(\Omega)<a^d$. If $\Omega$ contains 2 different 
points $z, z'\in aB(\Z^d)$, then, using that this implies $z''=z-z'\in aB(\Z^d)$, 
we obtain 
\[
{\rm vol}_d(\Omega) \,\ge\, \prod_{i=1}^d |z_i-z'_i| 
\,\ge\, \prod_{i=1}^d |z''_i| \,\ge\, a^d
\]
from Lemma~\ref{lem:product}: a contradiction. 
For ${\rm vol}_d(\Omega)\ge a^d$ we divide $\Omega$ along one coordinate in 
$\lfloor a^{-d}\,{\rm vol}_d(\Omega)+1 \rfloor$ equal pieces, i.e.~pieces 
with volume less than $a^d$, and use the same argument as above.
\end{proof}

\begin{rem}
Although we focus in the following on the construction of nodes that is based 
on the polynomial $P_d$ from \eqref{eq:poly}, the same contruction works 
with any irreducible polynomial of degree $d$ with $d$ different real roots 
and leading coefficient 1. 
For example, if the dimension is a power of 2, i.e.~$d=2^k$ for 
some $k\in\N$, we can be even more specific. 
In this case we can choose the polynomial
\[
P^*_d(x) \,=\, 2 \cos\Bigl(d\cdot\arccos(x/2)\Bigr), 
\]
cf.~the \emph{Chebyshev polynomials}. The roots of this polynomial are given by 
\[
\xi_i \,=\, 2 \cos\left(\frac{\pi(2i-1)}{2d}\right), \qquad i=1,\dots,d.
\]
Hence, the construction of the lattice $\X$ that is based on this polynomial 
is completely explicit. 
For a suitable polynomial if $2d+1$ is prime, see \cite{LeeW11}.
We didn't try to find a completely explicit construction 
in the intermediate cases. 
\end{rem}

\subsection{The error bound} \label{subsec:error}

In this subsection we prove that the algorithm $\Qo_a$ from 
\eqref{eq:algorithm1} has the optimal order of convergence for functions 
from $\Ho$, i.e.~that 
\[
e(\Qo_a, \Ho) \,\le\, C_{s,d}\; n^{-s}\, (\log n)^{\frac{d-1}{2}}, 
\]
where $n=n(a,T):=|X_a^d|$ is the number of nodes used by $\Qo_a$ and 
$C_{s,d}$ is independent of $n$.

For this we need the following two lemmas. 
Recall that the \emph{Fourier transform} of an integrable function 
$f\in L_1(\R^d)$ is given by
\[
\hat f(y) \,:=\, \int_{\R^d} f(x)\, \e^{-2\pi\,i\,\l y,x\r} \dint x, \qquad y\in\R^d, 
\]
with $\l y,x\r:=\sum_{j=1}^d y_j x_j$. 
%Furthermore, let $\bar y=\prod_{j=1}^d y_j$.
% $\bar y=\prod_{j=1}^d \max\{1,|y_j|\}$
Furthermore, let
\begin{equation}\label{eq:multiplier}
\nu_s(y)=\prod_{j=1}^d \left(\sum_{\ell=0}^s |2\pi y_j|^{2\ell}\right)
= \sum_{\a\in\N_0^d:\, \|\a\|_\infty\le s}\, \prod_{j=1}^d |2\pi y_j|^{2\alpha_j}, 
\qquad y\in\R^d.
\end{equation}
Clearly, 
\[\begin{split}
\nu_s(y) |\hat{f}(y)|^2 
\,&=\, \sum_{\a\in\N_0^d:\, \|\a\|_\infty\le s}\,
			\left|\int_{\R^d} \prod_{j=1}^d (-2\pi\,i\, y_j)^{\alpha_j} f(x)\, 
				\e^{-2\pi\,i\,\l y, x\r} \dint x \right|^2\\
\,&=\, \sum_{\a\in\N_0^d:\, \|\a\|_\infty\le s}\, \abs{\widehat{D^\a f}(y)}^2
\end{split}\]
for all $f\in\H$ with compact support and $y\in\R^d$.

We begin with the following result on the sum of values of the Fourier transform.

\begin{lem}\label{lem:fourier_bound}
Let $f\in \H(\R^d)$ with $\supp(f)\subset A$ for some compact $A\subset\R^d$. 
Additionally, define 
$M_A:=\#\left\{m\in\Z^d\colon A\cap\bigl(m+(0,1)^d\bigr)\neq\varnothing\right\}$. 
%and let
%\begin{equation}\label{eq:multiplier}
%\nu_s(y)=\prod_{j=1}^d \left(\sum_{j=0}^s |2\pi y_j|^{2j}\right)
%= \sum_{\a\in\N_0^d:\, \|\a\|_\infty\le s}\, \prod_{j=1}^d |2\pi y_j|^{2\alpha_j}, 
%\qquad y\in\R^d.
%\end{equation}
Then, 
\[
\sum_{y\in\Z^d} \nu_s(y) |\hat{f}(y)|^2 
\,\le\, M_A\,\|f\|_{s,\mix}^2.
\]
%where $\nu(y)=\prod_{j=1}^d \left(\sum_{j=0}^s |2\pi y_j|^{2j}\right)$, $y\in\R^d$. 
\end{lem}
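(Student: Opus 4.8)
The plan is to use the identity displayed immediately before the statement, which rewrites $\nu_s(y)\,\abs{\hat f(y)}^2$ as the sum over all $\a\in\N_0^d$ with $\|\a\|_\infty\le s$ of $\abs{\widehat{D^\a f}(y)}^2$. Summing over $y\in\Z^d$ and interchanging the two sums (legitimate since all terms are nonnegative) reduces the claim to the one-index estimate
\[
\sum_{y\in\Z^d}\abs{\hat g(y)}^2 \,\le\, M_A\,\norm{g}_{L_2}^2, \qquad g:=D^\a f,
\]
valid for each fixed $\a$. Summing this bound over all admissible $\a$ and recalling the definition of $\norm{f}_{s,\mix}^2$ from \eqref{eq:norm} then yields the lemma. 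Note that $\supp(g)\subset\supp(f)\subset A$ and that $g\in L_2(\R^d)$ has compact support.

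To prove the one-index estimate I would periodize $g$. Since $g$ lies in $L_2(\R^d)$ and has compact support, the function $G(x):=\sum_{m\in\Z^d}g(x+m)$ is a well-defined element of $L_2([0,1]^d)$, and a direct computation of its Fourier coefficients on $[0,1]^d$ identifies them with the values $\hat g(y)$, $y\in\Z^d$, of the Fourier transform of $g$ on $\R^d$. Parseval's identity for Fourier series then gives
\[
\sum_{y\in\Z^d}\abs{\hat g(y)}^2 \,=\, \int_{[0,1]^d}\abs{G(x)}^2\dint x.
\]

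The counting enters when bounding the integrand. For almost every $x\in(0,1)^d$ the translates $x+m$, $m\in\Z^d$, meet each cube $m+(0,1)^d$ exactly once, so $g(x+m)\neq0$ forces $A\cap\bigl(m+(0,1)^d\bigr)\neq\varnothing$; hence at most $M_A$ of the summands defining $G(x)$ are nonzero. Applying the Cauchy--Schwarz inequality to this sum of at most $M_A$ terms gives the pointwise bound $\abs{G(x)}^2\le M_A\sum_{m\in\Z^d}\abs{g(x+m)}^2$. Integrating over $[0,1]^d$ and unfolding the periodization, $\int_{[0,1]^d}\sum_{m}\abs{g(x+m)}^2\dint x=\int_{\R^d}\abs{g(x)}^2\dint x=\norm{g}_{L_2}^2$, which supplies the factor $M_A$ and completes the one-index estimate.

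The Cauchy--Schwarz step and the unfolding are routine; the step that must be handled with care is the identification of the Fourier coefficients of the periodization $G$ with the values $\hat g(y)$, together with the justification of Parseval and of the termwise integration. This is precisely where the compact support of $g$, inherited from $f$, is essential, as it guarantees that $G$ is a genuine $L_2$ function on the torus. If one prefers, the argument can first be carried out for smooth $f$ and then extended by the density built into the definition \eqref{eq:class} of $\H$.
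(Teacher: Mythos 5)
Your proposal is correct and follows essentially the same route as the paper's proof: both reduce via the identity $\nu_s(y)\,\abs{\hat f(y)}^2=\sum_{\|\a\|_\infty\le s}\abs{\widehat{D^\a f}(y)}^2$, periodize, apply Parseval on $[0,1]^d$, bound the periodized sum of at most $M_A$ nonzero terms by convexity (your Cauchy--Schwarz is the paper's Jensen step), and unfold the periodization. The only cosmetic difference is that you periodize each $g=D^\a f$ separately after interchanging the sums, whereas the paper periodizes $f$ once and differentiates the periodization.
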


%\noindent{\bf M: I would like to replace $M_A$ by $\vol_d(A)$! Could this be true?}

\begin{proof}
Let $\G:=\{\a\in\N_0^d:\, \|\a\|_\infty\le s\}$. 
Define the function $g(x):=\sum_{m\in\Z^d} f(m+x)$, $x\in[0,1]^d$, 
and note that at most $M_A$ of the summands are not zero. 
Obviously, $g$ is 1-periodic.
Hence, we obtain by Parseval's identity and Jensen's inequality that
\[\begin{split}
\sum_{y\in\Z^d} \nu_s(y)|\hat{f}(y)|^2
%\,&=\, \sum_{\a\in\G}\sum_{y\in\Z^d} 
			%\left|\int_{\R^d} \prod_{j=1}^d (-2\pi\,i\, y_j)^{\alpha_j} f(x)\, 
				%\e^{-2\pi\,i\,\l y, x\r} \dint x \right|^2 \\
\,&=\, \sum_{\a\in\G}\sum_{y\in\Z^d} 
			\abs{\widehat{D^\a f}(y)}^2 
\,=\, \sum_{\a\in\G}\sum_{y\in\Z^d} \left|\int_{\R^d} D^\a f(x)\, 
				\e^{-2\pi\,i\,\l y, x\r} \dint x \right|^2 \\
\,&=\, \sum_{\a\in\G}\sum_{y\in\Z^d} \left|\sum_{m\in\Z^d} \int_{[0,1]^d} D^\a f(m+x)\, 
		\e^{-2\pi\,i\,\l y, x\r} \dint x  \right|^2 \\
\,&=\, \sum_{\a\in\G}\sum_{y\in\Z^d} \left|\int_{[0,1]^d} D^\a g(x)\, 
		\e^{-2\pi\,i\,\l y, x\r} \dint x  \right|^2 
=\, \sum_{\a\in\G} \int_{[0,1]^d} \left|D^\a g(x)\right|^2 \dint x \\
\,&=\, M_A^2\,\sum_{\a\in\G}\int_{[0,1]^d} 
		\left|\frac{1}{M_A}\sum_{m\in\Z^d} D^\a f(m+x)\right|^2 \dint x \\
\,&\le\, M_A^2\,\sum_{\a\in\G}\int_{[0,1]^d} \frac{1}{M_A}\sum_{m\in\Z^d} \left|D^\a f(m+x)\right|^2 \dint x \\
\,&=\, M_A\,\sum_{\a\in\G} \int_{\R^d} \left|D^\a f(x)\right|^2 \dint x 
\,=\, M_A\,\|f\|_{s,\mix}^2
\end{split}\] 
as claimed.\\
\end{proof}

Additionally, we need the following 
version of the \emph{Poisson summation formula} for lattices.

\begin{lem}\label{lem:poisson}
Let $\X=T(\Z^d)\subset\R^d$ be a full-dimensional lattice 
and $\X^*\subset\R^d$ be the associated dual lattice. 
Additionally, let $f\in\Ho$, $s\ge1$. Then,
\[
\det(T)\sum_{x\in\X\cap[0,1)^d} f(x) \,=\, \sum_{y\in\X^*} \hat{f}(y).
\]
In particular, the right-hand-side is convergent.
\end{lem}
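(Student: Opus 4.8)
The plan is to establish the Poisson summation formula for the full-dimensional lattice $\X = T(\Z^d)$ by reducing it to the classical Poisson summation formula on $\Z^d$ via a linear change of variables. First I would introduce the pushed-forward function $g(u) := f(Tu)$ for $u \in \R^d$; since $T$ is non-singular and $f \in \Ho$ has support in the open cube $(0,1)^d$ (in particular $f$ is smooth with compact support), $g$ is again a smooth compactly supported function on $\R^d$, so all the sums and Fourier integrals below converge absolutely. The left-hand side of the claimed identity rewrites as $\det(T)\sum_{m \in \Z^d} g(m)\,\ind[Tm \in [0,1)^d]$, but because $\supp(f) \subset (0,1)^d$ the indicator is irrelevant for all $m$ that contribute: the only lattice points $x = Tm$ at which $f$ can be nonzero already lie in $(0,1)^d$, so in fact $\sum_{x \in \X \cap [0,1)^d} f(x) = \sum_{m \in \Z^d} g(m)$. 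This is the step where the restriction to $\Ho$ (compact support strictly inside the cube) does the essential work, sparing us any boundary or aliasing subtleties.

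Next I would compute the Fourier transform of $g$ in terms of $\hat f$. By the standard substitution rule, $\hat g(w) = \int_{\R^d} f(Tu)\,\e^{-2\pi i \langle w, u\rangle}\dint u$; setting $x = Tu$ so that $\dint x = \det(T)\dint u$ and $\langle w, u\rangle = \langle w, T^{-1}x\rangle = \langle (T^{-1})^\top w, x\rangle$, one gets
\[
\hat g(w) \,=\, \det(T^{-1})\int_{\R^d} f(x)\,\e^{-2\pi i \langle (T^\top)^{-1} w, x\rangle}\dint x \,=\, \det(T^{-1})\,\hat f\bigl((T^\top)^{-1} w\bigr).
\]
Now I apply the classical Poisson summation formula $\sum_{m \in \Z^d} g(m) = \sum_{w \in \Z^d} \hat g(w)$, which is legitimate because $g$ is smooth with compact support. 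Substituting the expression for $\hat g$ turns the right-hand side into $\det(T^{-1})\sum_{w \in \Z^d} \hat f\bigl((T^\top)^{-1} w\bigr)$.

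Finally I would identify the index set of the resulting sum with the dual lattice. Recall from the excerpt that $T = (B^\top)^{-1}$, so $(T^\top)^{-1} = B$, and hence $(T^\top)^{-1} w = Bw$ ranges exactly over $\X^* = B(\Z^d)$ as $w$ ranges over $\Z^d$. Thus the right-hand side becomes $\det(T^{-1})\sum_{y \in \X^*} \hat f(y)$. Combining this with the left-hand identity and multiplying through by $\det(T)$ yields
\[
\det(T)\sum_{x \in \X \cap [0,1)^d} f(x) \,=\, \det(T)\,\sum_{m \in \Z^d} g(m) \,=\, \sum_{y \in \X^*} \hat f(y),
\]
which is the claim. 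The main obstacle is really a matter of bookkeeping rather than depth: one must justify that the classical Poisson summation applies (handled by the smoothness and compact support inherited from $\Ho$), verify that the support condition collapses the cube-restricted sum to the full lattice sum, and track the transpose–inverse relationship between $T$ and $B$ carefully so that the dual lattice $\X^* = B(\Z^d)$ emerges with the correct determinant factor. Absolute convergence of $\sum_{y \in \X^*}\hat f(y)$ then follows automatically, since $\hat f$ decays rapidly as $f$ is smooth and compactly supported.
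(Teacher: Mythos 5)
Your reduction to the integer lattice --- substituting $g=f\circ T$, computing $\hat g(w)=\det(T^{-1})\,\hat f\bigl((T^\top)^{-1}w\bigr)$, and identifying $(T^\top)^{-1}(\Z^d)$ with $\X^*$ --- is exactly the paper's route, and your observation that $\supp(f)\subset(0,1)^d$ collapses the cube-restricted sum to the full lattice sum is also how the paper proceeds. But there is a genuine gap in the one sentence that carries all the analytic weight: functions in $\Ho$ are \emph{not} smooth with compact support. By \eqref{eq:class} and \eqref{eq:class2}, $\Ho$ is the closure, in the norm $\|\cdot\|_{s,\mix}$, of $C^{sd}$ functions, restricted to those supported inside the open cube; its elements possess only $s$ derivatives per coordinate, and only in an $L_2$ sense. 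Consequently $\hat f$ does \emph{not} decay rapidly, and the convergence of $\sum_{y\in\X^*}\hat f(y)$ --- which is explicitly part of the statement to be proved --- cannot be waved through. The best pointwise bound available from this regularity, $|\hat f(y)|\lesssim \prod_{j=1}^d\min\{1,|2\pi y_j|^{-s}\}$, is not even summable over $\Z^d$ when $s=1$, a case the lemma covers. For the same reason your appeal to the ``classical Poisson summation formula, legitimate because $g$ is smooth with compact support,'' does not apply: continuity plus compact support alone is known to be insufficient for the pointwise formula, so some quantitative control of $\hat g$ is indispensable.

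The paper closes precisely this hole with an $L_2$ argument. It applies Lemma~\ref{lem:fourier_bound} to $g$ with the weight $\nu_1$, obtaining $\sum_{y\in\Z^d}\nu_1(y)|\hat g(y)|^2\le M_{T^{-1}([0,1]^d)}\,\|g\|_{1,\mix}^2<\infty$ (here $s\ge1$ enters), and then Cauchy--Schwarz against the summable weight $\nu_1(y)^{-1}$ gives $\sum_{y\in\Z^d}|\hat g(y)|<\infty$. This absolute convergence is then what justifies the $\Z^d$ Poisson formula itself: the Fourier series of the periodization $\sum_{m\in\Z^d}g(m+x)$ converges absolutely, hence pointwise to the periodization, and evaluation at $x=0$ yields $\sum_{y\in\Z^d}\hat g(y)=\sum_{m\in\Z^d}g(m)$. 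To repair your proof, replace ``smooth, hence rapidly decaying'' by this (or an equivalent) argument; the change-of-variables bookkeeping in your write-up is correct and survives unchanged.
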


\begin{proof}
%Let $\bar{f}:\R^d\to\R$ be the function with $\bar{f}(x)=f(x)$ for $x\in[0,1]^d$ 
%and $\bar{f}(x)=0$ for $x\notin[0,1]^d$. 
To ease the notation we identify each $f\in\Ho$ with the continuation 
to the whole space by zero, i.e.~$f(x)=0$ for $x\notin[0,1]^d$. 
Let $g(x)=f(Tx)$, $x\in\R^d$. 
Then, by the definition of the lattice, we have
\[
\sum_{x\in\X\cap[0,1)^d} f(x) \,=\, \sum_{x\in\X} f(x) 
\,=\, \sum_{x\in\Z^d} f(Tx)
\,=\, \sum_{x\in\Z^d} g(x).
\]
Additionally, note that $B=(T^\top)^{-1}$ is the generator of $\X^*$ and hence  
\[\begin{split}
\sum_{y\in\X^*} \hat{f}(y) \,&=\, \sum_{y\in\Z^d} \hat{f}(By)
\,=\, \sum_{y\in\Z^d} \int_{\R^d} f(x)\, \e^{-2\pi\,i\,\l By,x\r} \dint x
\,=\, \sum_{y\in\Z^d} \int_{\R^d} f(x)\, \e^{-2\pi\,i\,\l y, B^\top x\r} \dint x \\
\,&=\, \det(T)\,\sum_{y\in\Z^d} \int_{\R^d} f(Tz)\, \e^{-2\pi\,i\,\l y, z\r} \dint z
\,=\, \det(T)\,\sum_{y\in\Z^d} \int_{\R^d} g(z)\, \e^{-2\pi\,i\,\l y, z\r} \dint z\\
\,&=\, \det(T)\,\sum_{y\in\Z^d} \hat{g}(y), 
\end{split}\]
where we performed the substitution $x=Tz$. 
(Here, we need that the lattice is full-dimensional.)
In particular, the series on the left hand side converges if and only if 
the right hand side does. 
%For the proof of this convergence let 
%$h:=g^{(1,\dots,1)}=\frac{\partial^{d}g}{\partial x_1\dots\partial x_d}$ and 
%note that $f\in\Ho$, $s\ge1$, implies $\|h\|_{L_2(\R^d)}\le\|g\|_{s,\mix}<\infty$.
%%Moreover, the function $h^*(x):=\sum_{m\in\Z^d} h(m+x)$, $x\in[0,1]^d$, 
%%is periodic in $[0,1]^d$
%We obtain by Lemma~\ref{lem:fourier_bound} that 
%\[
%\sum_{y\in\Z^d} |\hat{h}(y)|^2 \,\le\, M_{T^{-1}([0,1]^d)}\,\|h\|_{L_2(\R^d)}^2 < \infty
%\]
%with $M_{T^{-1}([0,1]^d)}$ from Lemma~\ref{lem:fourier_bound}, since 
%$\supp(h)\subset T^{-1}([0,1]^d)$.
%%by Parseval's identity that
%%\[\begin{split}
%%\sum_{y\in\Z^d} |\hat{h}(y)|^2
%%\,&=\, \sum_{y\in\Z^d} \left|\int_{\R^d} h(x)\, \e^{-2\pi\,i\,\l y, x\r} \dint x \right|^2
%%\,=\, \sum_{y\in\Z^d} \left|\sum_{m\in\Z^d} \int_{[0,1]^d} h(m+x)\, 
		%%\e^{-2\pi\,i\,\l y, x\r} \dint x  \right|^2 \\
%%&=\, \int_{[0,1]^d} \left|\sum_{m\in\Z^d} h(m+x)\right|^2 \dint x 
%%\,=\,  \int_{\R^d} \left|h(x)\right|^2 \dint x 
%%\,=\, \|h\|_{L_2(\R^d)}^2 < \infty
%%\end{split}\] 
%Hence, %with $\bar y=\prod_{j=1}^d \max\{1,|y_j|\}$, 
%by the differentiation property of the Fourier transform,
%\[
%\sum_{y\in\Z^d} |\hat{g}(y)| 
%\,=\, \hat{g}(0)+\sum_{y\neq0} |\bar{y}|^{-1}\, |\hat{h}(y)| 
%\,\le\, \hat{g}(0)+\left(\sum_{y\neq0} |\bar{y}|^{-2}\right)^{1/2}\, 
			%\left(\sum_{y\neq0} |\hat{h}(y)|^2 \right)^{1/2} < \infty,
%\]
%which proves the convergence.
For the proof of this convergence 
%let $h:=g^{(1,\dots,1)}=\frac{\partial^{d}g}{\partial x_1\dots\partial x_d}$ and 
note that $f\in\Ho$, $s\ge1$, implies $\|g\|_{1,\mix}\le\|g\|_{s,\mix}<\infty$.
We obtain by Lemma~\ref{lem:fourier_bound} that 
\[
\sum_{y\in\Z^d}\nu_1(y) |\hat{g}(y)|^2 \,\le\, M_{T^{-1}([0,1]^d)}\,\|g\|_{1,\mix}^2 < \infty
\]
with $M_{T^{-1}([0,1]^d)}$ from Lemma~\ref{lem:fourier_bound}, since 
$\supp(g)\subset T^{-1}([0,1]^d)$.
Hence, 
\[
\sum_{y\in\Z^d} |\hat{g}(y)| 
\,\le\, \left(\sum_{y\neq0} |\nu_1(y)|^{-1}\right)^{1/2}\, 
			\left(\sum_{y\neq0} \nu_1(y)\,|\hat{g}(y)|^2 \right)^{1/2} < \infty,
\]
which proves the convergence.
We finish the proof of Lemma~\ref{lem:poisson} by
\[\begin{split}
\sum_{y\in\Z^d} \hat{g}(y) 
\,=\, \sum_{y\in\Z^d} \int_{\R^d} g(z)\, \e^{-2\pi\,i\,\l y, z\r} \dint z
\,=\, \sum_{y\in\Z^d} \int_{[0,1]^d} 
		\sum_{m\in\Z^d} g(m+z)\, \e^{-2\pi\,i\,\l y, z\r} \dint z 
\,=\, \sum_{m\in\Z^d} g(m). 
\end{split}\] 
The last equality is simply the evaluation of the Fourier series of the 
function $\sum_{m\in\Z^d} g(m+x)$, $x\in[0,1]^d$, at the point $x=0$. 
It follows from the absolute convergence of the left hand side 
that this Fourier series is pointwise convergent.
\end{proof}

%Now we are able to prove Theorem~\ref{thm1}.

%\begin{proof}[Proof of Theorem~\ref{thm1}]

By Lemma~\ref{lem:poisson} we can write the algorithm $\Qo_a$, $a>1$, as
\[
\Qo_a(f) \,=\,a^{-d} \det(T)\, \sum_{x\in X_a^d} f(x) 
%\,=\, \sum_{m\in\Z^d} \hat{f}(aBm), \qquad f\in\Ho,
\,=\, \sum_{z\in aB(\Z^d)} \hat{f}(z), \qquad f\in\Ho,
\]
where $aB$ (see \eqref{eq:generator_dual}) is the 
generator of the dual lattice of $a^{-1}T(\Z^d)$ (see \eqref{eq:generator}) 
and $X_a^d=(a^{-1}\X)\cap[0,1)^d$.
%(Recall that the generator of $X_a^d$ is given by $T/a$.)
Since $I(f)=\hat{f}(0)$ we obtain
\[\begin{split}
|I(f)-\Qo_a(f)| \,&=\, \abs{\sum_{z\in aB(\Z^d)\setminus0}\hat{f}(z)}
\,\le\, \sum_{z\in aB(\Z^d)\setminus0}\abs{\nu_s(z)}^{-1/2}\abs{\nu_s(z)^{1/2}\,\hat{f}(z)}\\
\,&\le\, \left(\sum_{z\in aB(\Z^d)\setminus0} |\nu_s(z)|^{-1}\right)^{1/2}\, 
			\left(\sum_{z\in aB(\Z^d)\setminus0} \nu_s(z)\, |\hat{f}(z)|^2 \right)^{1/2}.
\end{split}\]
with $\nu_s$ from \eqref{eq:multiplier}.
We bound both sums separately. 
First, note that Lemma~\ref{lem:fourier_bound} implies that
\[\begin{split}
\sum_{z\in aB(\Z^d)\setminus0} \nu_s(z)\, |\hat{f}(z)|^2
\,&=\, \sum_{m\in\Z^d\setminus0} \nu_s(aBm)\, |\hat{f}(aBm)|^2 
\,=\, \sum_{\a:\, \|\a\|_\infty\le s}\,\sum_{m\in\Z^d\setminus0}\, 
			\abs{\widehat{D^\a f}(aBm)}^2 \\
\,&=\, \sum_{\a:\, \|\a\|_\infty\le s}\,\sum_{m\in\Z^d\setminus0} \abs{\int_{\R^d} D^\a f(x)\, \e^{-2\pi\,i\,\l aBm, x\r} \dint x}^2 \\
\,&=\, (a^{-d}\det(T))^2\sum_{\a:\, \|\a\|_\infty\le s}\,\sum_{m\in\Z^d\setminus0} 
			\abs{\int_{\R^d} D^\a f(a^{-1}Ty)\, \e^{-2\pi\,i\,\l m, y\r} \dint y}^2 \\
\,&\le\, (a^{-d}\det(T))^2\, M_{a T^{-1}([0,1]^d)}\sum_{\a:\, \|\a\|_\infty\le s}\,\int_{\R^d} \abs{D^\a f(a^{-1}Ty)}^2 \dint y \\
\,&=\, C(a,T)\, \|f\|_{s,\mix}^2 
\end{split}\]
with $C(a,T):=a^{-d}\det(T)\, M_{a T^{-1}([0,1]^d)}$.
%Additionally, note that $M_{a T^{-1}([0,1]^d)}=|X_a^d|$ and hence, 
%by Lemma~\ref{lem:number_of_nodes}
Using that $T^{-1}([0,1]^d)$ is Jordan measurable, we obtain 
$\lim_{a\to\infty}C(a,T)=1$ and, hence, for $a>1$ large enough, 
\begin{equation} \label{eq:term2}
\left(\sum_{z\in aB(\Z^d)\setminus0} \nu_s(z)\, |\hat{f}(z)|^2 \right)^{1/2}
\,\le\, 2 \|f\|_{s,\mix}.
\end{equation}

Now we treat the first sum. 
Recall from Lemma~\ref{lem:product} that $\prod_{j=1}^d{z_j}\in\Z\setminus0$ for all 
$z\in B(\Z^d)\setminus0$ and define, for $m=(m_1,\dots,m_d)\in\N_0^d$, the sets 
\[
\rho(m) \,:=\, \{x\in\R^d\colon \lfloor2^{m_j-1}\rfloor\le |x_j|<2^{m_j} 
\text{ for } j=1,\dots,d\}.
\]
Note that $\prod_{j=1}^d|x_j|<2^{\|m\|_1}$ for all $x\in\rho(m)$. 
This shows that $|(aB(\Z^d)\setminus0)\cap\rho(m)|=0$ for all 
$m\in\N_0^d$ with $\|m\|_1\le \lfloor d\log_2(a)\rfloor=:r$.
Hence, with $|\bar{z}|:=\prod_{j=1}^d\max\{1,2\pi|z_j|\}$, we obtain 
\[\begin{split} 
\sum_{z\in aB(\Z^d)\setminus0} |\nu_s(z)|^{-1}
\,&\le\, \sum_{z\in aB(\Z^d)\setminus0} |\bar{z}|^{-2s}
\,=\, \sum_{\ell=r+1}^\infty\,\sum_{m: \|m\|_1=\ell}\,
			\sum_{z\in (aB(\Z^d)\setminus0)\cap\rho(m)} |\bar{z}|^{-2s}.
\end{split}\]
Note that for $z\in\rho(m)$ we have 
$|\bar{z}|\ge \prod_{j=1}^d\max\{1,2\pi\lfloor2^{m_j-1}\rfloor\}\ge2^{\|m\|_1}$.
Since $\rho(m)$ is a union of $2^d$ axis-parallel boxes 
each with volume less than $2^{\|m\|_1}$, 
Corollary~\ref{coro:boxes} implies that 
$\abs{(aB(\Z^d)\setminus0)\cap\rho(m)}\le 2^d(a^{-d} 2^{\|m\|_1}+1)
\le 2^{d+1} a^{-d} 2^{\|m\|_1}\le 2^{d+2} 2^{\|m\|_1-r}$ 
for $m$ with $\|m\|_1\ge r$.
Additionally, note that 
$\abs{\{m\in\N_0^d\colon \|m\|_1=\ell\}}=\binom{d+\ell-1}{\ell}<(\ell+1)^{d-1}$.
We obtain 
\[\begin{split}
\sum_{z\in aB(\Z^d)\setminus0} |\nu_s(z)|^{-1}
\,&\le\, \sum_{\ell=r+1}^\infty\sum_{m: \|m\|_1=\ell}\,
			\abs{(B(\Z^d)\setminus0)\cap\rho(m)}\, 2^{-2s \|m\|_1}\\
\,&\le\, 2^{d+2}\, \sum_{\ell=r+1}^\infty\sum_{m: \|m\|_1=\ell}\, 
				2^{\|m\|_1-r}\, 2^{-2s\|m\|_1} \\
\,&\le\, 2^{d+2}\,\sum_{\ell=r+1}^\infty (\ell+1)^{d-1}\,
				2^{\ell-r}\, 2^{-2s\ell} 
\,=\, 2^{d+2}\, \sum_{t=1}^\infty (t+r+1)^{d-1}\,
				2^{t}\, 2^{-2s(t+r)} \\
\,&\le\, 2^{2d+2}\, 2^{-2sr}\,r^{d-1}\,\sum_{t=1}^\infty (t+1)^{d-1}\,2^{(1-2s)t}\\
\,&\le\, 2^{2d+2s+2}\, a^{-2sd}\,\log_2\bigl(a^d\bigr)^{d-1}\;
				\sum_{t=1}^\infty (t+1)^{d-1}\,2^{(1-2s)t},\\
\end{split}\]
where we've used that $d\log_2(a)-1\le r\le d\log_2(a)$. 
Note that the last sum equals $2^{2s-1} {\rm Li}_{1-d}(2^{1-2s})-1$, where {\rm Li} 
is the \emph{polylogarithm} (also known as Jonqui\`ere's function), i.e. 
 \[
{\rm Li}_s(z) \,:=\, \sum_{\ell=1}^\infty \frac{z^\ell}{\ell^s}.
\]

So, all together 
\begin{equation} \label{eq:explicit_bound}
e(\Qo_a, \Ho) \,\le\, c_{s,d}\; a^{-sd}\, \left(\log_2\bigl(a^d\bigr)\right)^{\frac{d-1}{2}}
\end{equation}
for $a>1$ large enough, where 
\[
c_{s,d} \,=\, 2^{d+2s+1}\, {\rm Li}_{1-d}(2^{1-2s})^{1/2}.
\]
From Lemma~\ref{lem:number_of_nodes} we know that the number of nodes 
used by $\Qo_a$ is proportional to $a^d$. This proves Theorem~\ref{thm1}.

\begin{rem} \label{rem:domain}
It is interesting to note that the proof of Theorem~\ref{thm1} is to 
a large extent independent of the domain of integration. 
For an arbitrary Jordan measurable set $\O\subset\R^d$ we can consider the 
algorithm $\Qo_a$ from \eqref{eq:algorithm1} with 
the set of nodes $X_a^d$ replaced by 
$X_a^d(\O)=\bigl(a^{-1}T(\Z^d)\bigr) \cap\O$. 
The only difference in the estimates would be that $C(a,T)$, cf.~\eqref{eq:term2}, 
converges to $\vol_d(\O)$ instead of 1.
Thus, we have 
\[
e(\Qo_a, \Ho(\O)) 
\,\le\, c_{s,d,\O}\; a^{-sd}\, \left(\log_2\bigl(a^d\bigr)\right)^{\frac{d-1}{2}}
\]
for large enough $a>1$ with $c_{s,d,\O}=c_{s,d}\, \vol_d(\O)^{1/2}$ 
as in \eqref{eq:explicit_bound}.
Since $a^d$ is in this case proportional to 
$\abs{X_a^d(\O)}/(\det(T^{-1}) \vol_d(\O))$,  
there would be some additionaly volume dependent terms in $C_{s,d}$ from 
Theorem~\ref{thm1}. 
\end{rem}

\end{document}